\newtheorem{theorem}{Theorem}[section]
\newtheorem{prop}[theorem]{Proposition}
\newtheorem{lem}[theorem]{Lemma}
\newtheorem{cor}[theorem]{Corollary}
\makeatletter \@addtoreset{equation}{section}
\newcommand{\qbinom}[2]{\genfrac{[}{]}{0pt}{}{#1}{#2}}
\DeclareMathOperator*{\CT}{CT}
\begin{document}

\title[]{A recursion for a symmetric function generalization of the $q$-Dyson constant term identity}

\author{Yue Zhou}

\address{School of Mathematics and Statistics, Central South University,
Changsha 410075, P.R. China}

\email{zhouyue@csu.edu.cn}

\subjclass[2010]{05A30, 33D70, 05E05}

\date{February 26, 2020}

\begin{abstract}
In 2000, Kadell gave an orthogonality conjecture for a symmetric function generalization of the $q$-Dyson constant term identity or the Zeilberger--Bressoud $q$-Dyson theorem.
The non-zero part of Kadell's orthogonality conjecture is a constant term identity indexed by a weak composition $v=(v_1,\dots,v_n)$ in the case when only one $v_i\neq 0$.
This conjecture was first proved by K\'{a}rolyi, Lascoux and Warnaar in 2015. They further formulated a closed-form expression for the above mentioned constant term in the case when all the parts of $v$ are distinct. Recently we obtain a recursion for this constant term provided that the largest part of $v$ occurs with multiplicity one in $v$. In this paper, we generalize our previous result to all compositions $v$.

\noindent
\textbf{Keywords:}
$q$-Dyson constant term identity, Kadell's orthogonality conjecture, symmetric function,
recursion
\end{abstract}

\maketitle

\section{Introduction}\label{sec-intr}

In 1975, Andrews \cite{andrews1975} conjectured that for non-negative integers $a_1,a_2,\dots,a_n$,
\begin{equation}\label{q-Dyson}
\CT_x \prod_{1\leq i<j\leq n}
(x_i/x_j;q)_{a_i}(qx_j/x_i;q)_{a_j}=
\frac{(q;q)_{a_1+\cdots+a_n}}{(q;q)_{a_1}(q;q)_{a_2}\cdots(q;q)_{a_n}},
\end{equation}
where for $k$ a non-negative integer $(z;q)_k:=(1-z)(1-zq)\dots(1-zq^{k-1})$ is a
$q$-shifted factorial, and $\CT\limits_x$ denotes taking the constant term with respect to
$x:=(x_1,x_2,\dots,x_n)$. When $q\rightarrow 1^-$, the constant term identity~\eqref{q-Dyson} reduces to the Dyson constant term identity~\cite{dyson}.

Andrews' $q$-Dyson conjecture was first proved combinatorially in 1985 by Zeilberger and Bressoud \cite{zeil-bres1985}.
Twenty years later Gessel and Xin \cite{gess-xin2006} gave a second proof using formal Laurent series, and in 2014, K\'{a}rolyi and Nagy \cite{KN} discovered a very short and elegant proof using multivariable Lagrange interpolation.
Finally, Cai \cite{cai} found an inductive proof. These days, Andrews' ex-conjecture is usually referred as the Zeilberger--Bressoud $q$-Dyson theorem or the $q$-Dyson constant term identity.

There are many generalizations of the equal parameter case of the $q$-Dyson constant term identity, i.e., $a_1=a_2=\cdots=a_n$. For example, the $q$-Morris identity \cite{Morris1982} or the $q$-Selberg integral \cite{Askey,FW,Selberg}, and the Macdonald constant term ex-conjecture \cite{cherednik,MacSMC,Mac95,opdam}.
In the theory of Macdonald polynomials \cite{Mac95}, the equal parameter case of the $q$-Dyson identity is equivalent to a scalar product identity. This provides a satisfactory explanation for the equal parameter case of the $q$-Dyson identity in terms of orthogonal polynomials. Giving a similar such explanation for the full $q$-Dyson identity is an important open problem.

The first step towards a resolution of this problem was made by Kadell \cite{kadell}.
He formulated a symmetric function generalization of the $q$-Dyson identity and gave an orthogonal conjecture we will describe next.
Let $X=(x_1,x_2,\dots)$ be an alphabet of countably many variables.
Then the $r$th complete symmetric function $h_r(X)$
may be defined in terms of its generating function as
\begin{equation}\label{e-gfcomplete}
\sum_{r\geq 0} z^r h_r(X)=\prod_{i\geq 1}
\frac{1}{1-zx_i}.
\end{equation}
More generally, for the complete symmetric function indexed by a composition (or partition)
$v=(v_1,v_2,\dots,v_k)$
\[
h_v:=h_{v_1}\cdots h_{v_k}.
\]
For $a:=(a_1,a_2,\dots,a_n)$ a sequence of non-negative integers, let $x^{(a)}$ denote the alphabet
\[
x^{(a)}:=(x_1,x_1q,\dots,x_1q^{a_1-1},\dots,
x_n,x_nq,\dots,x_nq^{a_n-1})
\]
of cardinality $|a|:=a_1+\cdots+a_n$.
Define a generalized $q$-Dyson constant term
\begin{equation}\label{GDyson1}
D_{v,\lambda}(a):=\CT_x
x^{-v}h_{\lambda}\big(x^{(a)}\big)
\prod_{1\leq i<j\leq n}
(x_i/x_j;q)_{a_i}(qx_j/x_i;q)_{a_j}.
\end{equation}
Here $v=(v_1,v_2,\dots,v_n)\in\mathbb{Z}^{n}$,
$x^v$ denotes the monomial $x_1^{v_1}\cdots x_n^{v_n}$,
and $\lambda$ is a partition such that
$|v|=|\lambda|$. (Note that if $|v|\neq|\lambda|$ then
$D_{v,\lambda}(a)=0$.)
For the constant term \eqref{GDyson1}, Kadell \cite[Conjecture 4]{kadell}
conjectured that for $r$ a positive integer and $v$ a composition such that $|v|=r$,
\begin{equation}\label{kadellconj}
D_{v,(r)}(a)=
\begin{cases}
\displaystyle
q^{\sum_{i=k+1}^n a_i}\qbinom{|a|+r-1}{a_k-1}
\prod_{\substack{1\leq i\leq n\\i\neq k}}\qbinom{a_i+\cdots+a_n}{a_i}
& \text{if $v=(0^{k-1},r,0^{n-k})$}, \\[6mm]
0 & \text{otherwise},
\end{cases}
\end{equation}
where $\qbinom{n}{k}$ is a $q$-binomial coefficient.
In fact Kadell only considered $v=(r,0^{n-1})$ in his conjecture, but the
more general statement given above is what was proved by K\'{a}rolyi,
Lascoux and Warnaar in \cite[Theorem 1.3]{KLW} using multivariable Lagrange
interpolation and key polynomials.
If for a sequence $u=(u_1,\dots,u_n)$ of integers we denote
by $u^{+}$ the sequence obtained from $u$ by ordering the $u_i$
in weakly decreasing order (so that $u^{+}$ is a partition
if $u$ is a composition), then K\'{a}rolyi et al.\ also
proved a closed-form expression for
$D_{v,v^{+}}(a)$ in the case when $v$ is a composition all of whose parts
are distinct, i.e., $v_i\neq v_j$ for all $1\leq i<j\leq n$.
Subsequently, Cai~\cite{cai} gave an inductive proof of Kadell's conjecture.
Recently, we~\cite{Zhou} obtained a recursion for $D_{v,v^+}(a)$ if
$v$ is a composition such that its largest part has multiplicity one,
see Corollary~\ref{cor-1} below.

In this paper, we obtain a recursion for $D_{v,v^+}(a)$ for $v$ an arbitrary non-zero composition.
Given a sequence $s=(s_1,\dots,s_n)$ and an integer $k\in \{1,2,\dots,n\}$,
define
$s^{(k)}:=(s_1,\dots,s_{k-1},s_{k+1},\dots,s_n)$.
Furthermore, for a subset $I=\{k_1,k_2,\dots,k_i\}$ of $\{1,2,\dots,n\}$, define
$s^{(I)}:=(s_1,s_2,\dots,\widehat{s}_{k_1},\dots,\widehat{s}_{k_2},\dots,\widehat{s}_{k_i},\dots,s_n)$,
where $\widehat{t}$ denotes the omission of $t$.
\begin{theorem}\label{Thm-rec}
Let $D_{0,0}(0):=1$. Then for $v=(v_1,\dots,v_n)$ a non-zero composition such that its largest part equals $r$ and $I:=\big\{i\in \{1,2,\dots,n\}: v_i=r\big\}$,
the following recursion holds:
\begin{equation}\label{formuh1}
D_{v,v^+}(a)=D_{v^{(I)},(v^{(I)})^+}\big(a^{(I)}\big)\frac{\qbinom{|a|+r-1}{a,r-1}}
{\qbinom{|a|-a_I+r-1}{a^{(I)},r-1}}
\sum_{\emptyset \neq J\subseteq I}(-1)^{|I\setminus J|}q^{L_{I,J}(a)}
\frac{1-q^{a_{J}}}{1-q^{|a|-a_{J}+r}},
\end{equation}
where $|S|$ denotes the number of elements in the set $S$,
$a_S:=\sum_{j\in S}a_j$, $\qbinom{n}{c}$ is a $q$-multinomial coefficient for the composition $c$, and
\begin{equation}\label{def-L}
L_{I,J}(a)=\sum_{\substack{1\leq i\leq j\leq n\\ i\in I, j\notin J}}a_j.
\end{equation}
\end{theorem}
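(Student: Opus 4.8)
The plan is to prove \eqref{formuh1} by isolating, inside the constant term \eqref{GDyson1}, the dependence on the variables $x_i$ with $i\in I$ together with the $|I|$ copies of the largest part $r$, and to show that ``integrating out'' this data produces exactly the explicit factor multiplying $D_{v^{(I)},(v^{(I)})^+}\big(a^{(I)}\big)$. Write $m:=|I|$ and $\mu:=(v^{(I)})^+$; since the parts of $v$ equal to $r$ sit at the positions of $I$, the partition $v^+$ has the form $(r^m,\mu)$, so $h_{v^+}=h_r^{m}\,h_\mu$. The first step is to expand every complete symmetric function over the reduced alphabet using the coproduct splitting coming from \eqref{e-gfcomplete},
\[
h_r\big(x^{(a)}\big)=\sum_{s=0}^{r}h_{r-s}\big(x^{(a^{(I)})}\big)\,h_{s}(R_I),\qquad R_I:=\{x_iq^{t}:i\in I,\ 0\le t<a_i\},
\]
where $x^{(a)}$ is the disjoint union of the kept letters $x^{(a^{(I)})}$ and the removed letters $R_I$. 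After this expansion the lower parts reassemble into $h_\mu\big(x^{(a^{(I)})}\big)$, which is precisely the symmetric function appearing in $D_{v^{(I)},(v^{(I)})^+}\big(a^{(I)}\big)$, and it is the interaction of the top block $h_r^{m}$ with the removed letters $R_I$ and with the Dyson kernel factors indexed by pairs meeting $I$ that must produce the nontrivial sum.

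A useful consistency check sits at $m=1$: when $I=\{k\}$ the sum in \eqref{formuh1} collapses to its single term $J=\{k\}$, for which $L_{I,J}(a)=a_{k+1}+\cdots+a_n$ by \eqref{def-L}, and the whole factor simplifies, after cancelling $(1-q^{a_k})$ and $(1-q^{|a|-a_k+r})$ against the $q$-multinomial ratio, to
\[
q^{a_{k+1}+\cdots+a_n}\qbinom{|a|+r-1}{a_k-1}.
\]
This matches Corollary~\ref{cor-1} (the recursion of \cite{Zhou}) and, in the case $v^{(k)}=0$, reproduces Kadell's evaluation \eqref{kadellconj}. So the genuinely new content is the case $m\ge 2$, where the essential difficulty is to peel off the maximal part $r$ while it is still \emph{tied} among the remaining positions of $I$.

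The mechanism I would use to generate the signed sum is a partial-fraction/residue extraction of the constant term over the tied block $\{x_i:i\in I\}$, carried out in the formal Laurent series framework of Gessel and Xin \cite{gess-xin2006}. Because the parts attached to these variables are all equal to the maximum $r$, extracting them one at a time creates symmetric over-contributions that must cancel; the surviving correction is an inclusion--exclusion over which variables in $I$ actually carry the top, indexed by the nonempty subsets $J\subseteq I$. In this picture the signs $(-1)^{|I\setminus J|}$ are the M\"obius signs of the inclusion--exclusion; the rational factor $\dfrac{1-q^{a_J}}{1-q^{|a|-a_J+r}}$ is the residue produced by the relevant poles of the Dyson kernel at the $I$-specialization; and $q^{L_{I,J}(a)}$ records the monomial shift caused by the asymmetry between $(x_i/x_j;q)_{a_i}$ and $(qx_j/x_i;q)_{a_j}$ as the $I$-variables are moved past the others, with \eqref{def-L} being exactly that shift. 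The leftover constant term is then $D_{v^{(I)},(v^{(I)})^+}\big(a^{(I)}\big)$, while the prefactor is assembled as the $q$-multinomial ratio $\qbinom{|a|+r-1}{a,r-1}/\qbinom{|a|-a_I+r-1}{a^{(I)},r-1}$, accounting for the reduction of the effective alphabet size from $|a|+r-1$ to $|a|-a_I+r-1$.

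The step I expect to be the main obstacle is precisely this residue extraction over the tied block: proving rigorously that it decomposes as the stated alternating sum and, above all, matching the exponents $L_{I,J}(a)$ and the factors $\frac{1-q^{a_J}}{1-q^{|a|-a_J+r}}$ exactly. The difficulty is combinatorial rather than conceptual, since the equal maximal parts destroy the clean single-variable reduction available in the multiplicity-one case and force one to control the simultaneous interaction of all $|I|$ variables. A secondary, purely technical obstacle is the bookkeeping of the prefactor under the simultaneous removal of several variables and of the $m$ copies of $h_r$, where the degree and $q$-power accounting must be organized carefully so as to land on the symmetric expression $\qbinom{|a|+r-1}{a,r-1}$.
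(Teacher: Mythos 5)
Your proposal correctly identifies the shape of the answer and the $|I|=1$ consistency check, but it does not contain a proof: the step you yourself flag as ``the main obstacle'' --- rigorously performing the extraction over the tied block $\{x_i:i\in I\}$ and showing that it produces the alternating sum with the exact exponents $L_{I,J}(a)$ and the factors $\frac{1-q^{a_J}}{1-q^{|a|-a_J+r}}$ --- is precisely the content of the theorem, and it is left unresolved. A simultaneous multivariate residue extraction over all of $I$ at once is exactly what the equal maximal parts make hard, and nothing in the proposal indicates how the over-counting would be controlled; asserting that ``the surviving correction is an inclusion--exclusion over $J$'' restates the claim rather than proving it. Likewise, the coproduct expansion of $h_r(x^{(a)})$ over the split alphabet introduces cross terms $h_{r-s}(x^{(a^{(I)})})h_s(R_I)$ for all $0\le s\le r$, and you give no argument for how these reassemble.

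The paper avoids the simultaneous extraction entirely. It encodes each $h_{\lambda_i}$ by an auxiliary variable $w_i$ via \eqref{e-relation}, applies Cai's partial fraction decomposition in the \emph{single} variable $w_1$, and obtains the one-variable-at-a-time inductive formula \eqref{e-ind} of Lemma~\ref{lem-ind}: $D_{v,v^+}(a)=\sum_{i\in I}q^{\sum_{j>i}a_j}\qbinom{|a|+r-1}{a_i-1}D_{v^{(i)},(v^{(i)})^+}(a^{(i)})$. The theorem then follows by induction on $|I|$: applying the induction hypothesis to each $v^{(i)}$ (whose set of maximal positions is $I\setminus\{i\}$) produces a double sum over $i\in I$ and $\emptyset\neq J\subseteq I\setminus\{i\}$, and the collapse of this double sum to the single alternating sum over $\emptyset\neq J\subseteq I$ is a self-contained combinatorial identity (Proposition~\ref{prop-eq}), proved by a partial fraction split of the denominator followed by the telescoping cancellations of Lemmas~\ref{lem-a} and~\ref{lem-b}. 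This bookkeeping of the exponents $L_{I,J}(a)$ under deletion of an index, equation \eqref{e-keyeq}, is the technical heart of the argument, and it is exactly the piece missing from your plan. To turn your proposal into a proof you would either need to carry out the multivariate extraction directly (harder than the problem you started with) or switch to the one-index-at-a-time reduction and supply the telescoping identity.
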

For example, if $v=(0,2,3,2,3,1)$, then $v^+=(3,3,2,2,1,0)$, $I=\{3,5\}$ and
$v^{(I)}=(0,2,2,1)$.
For $v$ a composition the constant term $D_{v,\lambda}(a)=0$ if $a_i=0$ and $v_i\neq 0$ for some $i$. On the other hand, if $a_i=v_i=0$ for some $i$,
then the constant term $D_{v,\lambda}(a)$ reduces to the $n-1$ variable case $D_{v^{(i)},\lambda}(a^{(i)})$. Hence, since we only concern the constant term $D_{v, v^+}(a)$ for $v$ a composition in this paper, we can assume that all the $a_i$ are positive integers from now on.
We note that for $v=(0,\dots,0)$
the constant term $D_{v,v^+}(a)$ corresponds to the $q$-Dyson constant term (the left-hand side of \eqref{q-Dyson}).
Using the recursion~\eqref{formuh1} and the $q$-Dyson identity~\eqref{q-Dyson},
we can obtain a closed-form formula for $D_{v,v^+}(a)$ for arbitrary compositions $v$.
If the largest part of $v$ has multiplicity one in $v$, then
Theorem~\ref{Thm-rec} reduces to \cite[Theorem~1.3]{Zhou}.
\begin{cor}\label{cor-1}
Let $v=(v_1,\dots,v_n)$ be a composition
such that its largest part has multiplicity one in $v$.
Fix a positive integer $k$ by $v_k=\max\{v\}$.
Then
\begin{equation}\label{e-Dyson}
D_{v,v^{+}}(a)=q^{\sum_{i=k+1}^n a_i}
\qbinom{v_k+|a|-1}{a_k-1}
D_{v^{(k)},(v^{(k)})^{+}}\big(a^{(k)}\big).
\end{equation}
\end{cor}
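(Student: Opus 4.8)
The plan is to obtain Corollary~\ref{cor-1} as the multiplicity-one specialization of Theorem~\ref{Thm-rec}, so that no fresh constant-term computation is required. When the largest part $r$ of $v$ occurs with multiplicity one, the index set $I=\{i:v_i=r\}$ is the singleton $\{k\}$, where $k$ is fixed by $v_k=\max\{v\}=r$. Consequently $a^{(I)}=a^{(k)}$ and $v^{(I)}=v^{(k)}$, and the sum over $\emptyset\neq J\subseteq I$ in \eqref{formuh1} collapses to the single term $J=\{k\}$, for which $|I\setminus J|=0$ so that the sign $(-1)^{|I\setminus J|}$ equals $1$.

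First I would evaluate the exponent $L_{I,J}(a)$ from \eqref{def-L} at $I=J=\{k\}$. The constraints $i\in I$, $j\notin J$, and $i\leq j$ force $i=k$ and $j>k$, whence
\begin{equation*}
L_{\{k\},\{k\}}(a)=\sum_{j=k+1}^{n}a_j,
\end{equation*}
which is exactly the power of $q$ appearing in \eqref{e-Dyson}.

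Next I would simplify the remaining $q$-factorial factors. Expanding the two $q$-multinomial coefficients via $\qbinom{N}{c}=(q;q)_N\big/\prod_i (q;q)_{c_i}$, their ratio telescopes, since the lower arguments differ only in the slot indexed by $k$ and share the common entry $r-1$, giving
\begin{equation*}
\frac{\qbinom{|a|+r-1}{a,\,r-1}}{\qbinom{|a|-a_k+r-1}{a^{(k)},\,r-1}}
=\frac{(q;q)_{|a|+r-1}}{(q;q)_{a_k}\,(q;q)_{|a|-a_k+r-1}}.
\end{equation*}
Multiplying by $\frac{1-q^{a_k}}{1-q^{|a|-a_k+r}}$ and absorbing the single factors through $(q;q)_m=(q;q)_{m-1}(1-q^m)$ yields
\begin{equation*}
\frac{(q;q)_{|a|+r-1}}{(q;q)_{a_k-1}\,(q;q)_{|a|-a_k+r}}
=\qbinom{|a|+r-1}{a_k-1}
=\qbinom{v_k+|a|-1}{a_k-1},
\end{equation*}
since $r=v_k$. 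Combining this with the factor $q^{\sum_{j=k+1}^{n}a_j}$ and with $D_{v^{(k)},(v^{(k)})^{+}}\big(a^{(k)}\big)$ reproduces \eqref{e-Dyson} verbatim.

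I expect no genuine obstacle here: the argument is a direct specialization, and the only care required is in matching the conventions for the $q$-multinomial coefficient and verifying that the lone surviving summand simplifies cleanly. This corollary therefore serves principally as a consistency check, confirming that Theorem~\ref{Thm-rec} recovers the earlier recursion of \cite[Theorem~1.3]{Zhou}.
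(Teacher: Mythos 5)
Your proposal is correct and matches the paper's intended route: the corollary is exactly the $|I|=1$ specialization of Theorem~\ref{Thm-rec}, and the key simplification you perform, namely
$\qbinom{|a|+r-1}{a_k-1}=\frac{1-q^{a_k}}{1-q^{|a|-a_k+r}}\,\qbinom{|a|+r-1}{a,r-1}\big/\qbinom{|a|-a_k+r-1}{a^{(k)},r-1}$,
is precisely the identity the paper uses (in reverse) in the $s=1$ base case of its proof of the theorem. All of your intermediate computations ($L_{\{k\},\{k\}}(a)=\sum_{j=k+1}^n a_j$, the telescoping of the multinomial ratio, and the absorption of the extra $1-q^{a_k}$ and $1-q^{|a|-a_k+r}$ factors) check out.
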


The method employed to prove Theorem~\ref{Thm-rec}
is based on Cai's splitting formula (see \eqref{e-Fsplit} below) for the following rational function.
\begin{equation}\label{d-F}
F(a,w):=\prod_{1\leq i<j\leq n}
(x_i/x_j;q)_{a_i}(qx_j/x_i;q)_{a_j}
\prod_{i=1}^n\prod_{j=1}^s(x_i/w_j;q)_{a_i}^{-1},
\end{equation}
where $w:=(w_1,\dots,w_s)$ is a sequence of parameters.
Throughout this paper, we assume that all terms of the form $cx_i/w_j$ in \eqref{d-F}
satisfy $|cx_i/w_j|<1$, where $c\in \mathbb{C}(q)\setminus \{0\}$. Hence,
\[
\frac{1}{1-cx_i/w_j}=\sum_{k\geq 0} (cx_i/w_j)^k.
\]
By the generating function of complete symmetric functions \eqref{e-gfcomplete},
the constant term $D_{v,\lambda}(a)$ equals a certain coefficient of $F(a,w)$. That is
\begin{equation}\label{e-relation}
D_{v,\lambda}(a)=\CT_{x,w}x^{-v}w^{\lambda}F(a,w),
\end{equation}
where we assume $s$ in \eqref{d-F} is fixed by $\ell(\lambda)$ the length of $\lambda$.
In \cite{cai}, Cai gave a splitting formula for $F(a,w)$. Using his formula, we obtain an inductive formula for $D_{v,v^+}(a)$ (see Lemma~\ref{lem-ind}).
Then, by the inductive formula we prove the recursion \eqref{formuh1} for $D_{v,v^+}(a)$.

The remainder of this paper is organised as follows.
In the next section we introduce some basic
notation used throughout this paper.
In Section~\ref{sec-splitting}, we present Cai's splitting formula for $F(a,w)$.
In Section~\ref{sec-inductive}, we obtain an inductive formula for $D_{v,v^+}(a)$ using Cai's splitting formula.
In Section~\ref{sec-preliminaries}, we prepare some results used in the proof of Theorem~\ref{Thm-rec}.
In Section~\ref{sec-proof}, we complete the proof of the recursion for $D_{v,v^+}(a)$.

\section{Basic notation}\label{sec-notation}

In this section we introduce some basic notation
used throughout this paper.

For $v=(v_1,\dots,v_n)$ a sequence, we write
$|v|$ for the sum of its entries, i.e.,
$|v|=v_1+\cdots+v_n$.
Moreover, if $v\in\mathbb{R}^{n}$ then we write
$v^{+}$ for the sequence obtained from $v$
by ordering its elements in weakly decreasing order.
If all the entries of $v$ are non-negative integers, we refer to $v$ as a (weak) composition.
A partition is a sequence $\lambda={(\lambda_1,\lambda_2,\dots)}$ of non-negative integers such that
${\lambda_1\geq \lambda_2\geq \cdots}$ and
only finitely-many $\lambda_i$ are positive.
The length of a partition $\lambda$, denoted
$\ell(\lambda)$ is defined to be the number of non-zero $\lambda_i$ (such $\lambda_i$ are known as the parts of $\lambda$).
We adopt the convention of not displaying the
tails of zeros of a partition.

For $k$ a non-negative integer, the $q$-shifted factorial is defined as
\[
(z)_k=(z;q)_k:=\prod_{i=0}^{k-1}(1-zq^i),
\]
where, typically, we suppress the base $q$.
Using the above we can define the
$q$-binomial coefficient as
\[
\qbinom{n}{k}=\frac{(q^{n-k+1})_k}{(q)_k}
\]
for $n$ and $k$ non-negative integers.
Furthermore, for $n$ a non-negative integer and $s=(s_1,s_2,\dots,s_k)$ a composition such that $|s|=n$,
we define the $q$-multinomial coefficient as
\[
\qbinom{n}{s}=\frac{(q)_n}{(q)_{s_1}(q)_{s_2}\cdots(q)_{s_k}}.
\]
In particular,
\[
\qbinom{n}{(k,n-k)}=\qbinom{n}{k}.
\]

\section{A splitting formula for $F(a,w)$}\label{sec-splitting}

In this section, we present Cai's splitting formula \cite[Proposition 4.2]{cai} for $F(a,w)$. To make the paper self-contained, we include a proof of this splitting formula using partial fraction decomposition.

We need the following simple result in the proof of the splitting formula mentioned above.
\begin{lem}\label{prop-c}
Let $i$ and $j$ be positive integers. Then, for $k$ an integer such that $0\leq k\leq j-1$,
\begin{subequations}\label{e-ab}
\begin{equation}\label{prop-b2}
\frac{(1/z)_{i}(qz)_j}{(q^{-k}/z)_i}=q^{ik}(q^{1-i}z)_k(q^{k+1}z)_{j-k},
\end{equation}
and
\begin{equation}\label{prop-a}
\frac{(z)_j(q/z)_i}{(q^{-k}/z)_i}=q^{(k+1)i}(q^{-i}z)_{k+1}(q^{k+1}z)_{j-k-1}.
\end{equation}
\end{subequations}
\end{lem}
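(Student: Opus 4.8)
The plan is to prove both identities in \eqref{e-ab} by direct manipulation of finite $q$-shifted factorials, using only two elementary tools that follow immediately from the product definition $(z)_m=\prod_{\ell=0}^{m-1}(1-zq^\ell)$: the splitting rule $(z)_{m+n}=(z)_m\,(zq^m)_n$, and the inversion (reflection) formula
\[
(z)_m=(-z)^m q^{\binom{m}{2}}(q^{1-m}/z)_m ,
\]
the latter obtained by writing each factor $1-zq^\ell=-zq^\ell(1-q^{-\ell}/z)$ and reindexing. Since the left-hand sides of \eqref{prop-b2} and \eqref{prop-a} differ only in that the numerator factor $(1/z)_i$ is replaced by $(q/z)_i$, I would treat them uniformly by carrying a parameter $t$, with $t=0$ for \eqref{prop-b2} and $t=1$ for \eqref{prop-a}.

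First I would isolate the problematic ratio $(q^t/z)_i/(q^{-k}/z)_i$. Two applications of the splitting rule give $(q^t/z)_i=(q^{-k}/z)_{k+t+i}/(q^{-k}/z)_{k+t}$ and $(q^{-k}/z)_{k+t+i}=(q^{-k}/z)_i\,(q^{i-k}/z)_{k+t}$, whence
\[
\frac{(q^t/z)_i}{(q^{-k}/z)_i}=\frac{(q^{i-k}/z)_{k+t}}{(q^{-k}/z)_{k+t}} .
\]
This step removes the dependence of the lengths on $i$ and reduces everything to factorials of length $k+t$.

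Next I would apply the inversion formula to both the numerator $(q^{i-k}/z)_{k+t}$ and the denominator $(q^{-k}/z)_{k+t}$. The two factors $q^{\binom{k+t}{2}}$ cancel, the reflected arguments become $q^{1-i}z$ (resp.\ $q^{-i}z$) and $qz$ (resp.\ $z$), and the prefactor ratio collapses to $q^{i(k+t)}$. This yields $(1/z)_i/(q^{-k}/z)_i=q^{ik}(q^{1-i}z)_k/(qz)_k$ in the first case and $(q/z)_i/(q^{-k}/z)_i=q^{i(k+1)}(q^{-i}z)_{k+1}/(z)_{k+1}$ in the second. Multiplying by the remaining factor $(qz)_j$ (resp.\ $(z)_j$) and splitting it at position $k$ (resp.\ $k+1$) via $(qz)_j=(qz)_k(q^{k+1}z)_{j-k}$ (resp.\ $(z)_j=(z)_{k+1}(q^{k+1}z)_{j-k-1}$) cancels the denominator and produces exactly the right-hand sides of \eqref{prop-b2} and \eqref{prop-a}.

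The computation is routine; the only points needing care are the bookkeeping of the powers of $q$ in the inversion step and the validity of the final splitting, which is where the hypothesis $0\le k\le j-1$ enters: it guarantees $0\le k+1\le j$, so that $(q^{k+1}z)_{j-k-1}$ has a non-negative number of factors. I expect this range check, rather than any conceptual difficulty, to be the main thing to get right.
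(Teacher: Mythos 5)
Your proof is correct and follows essentially the same route as the paper: reduce the ratio $(q^t/z)_i/(q^{-k}/z)_i$ to a ratio of $q$-shifted factorials of bounded length via the splitting rule, apply the reflection formula, and absorb the leftover denominator into $(qz)_j$ (resp.\ $(z)_j$) by splitting at $k$ (resp.\ $k+1$). The only cosmetic difference is that the paper obtains \eqref{prop-a} from \eqref{prop-b2} by the substitution $z\mapsto z/q$, $k\mapsto k+1$ rather than running the computation a second time with your parameter $t=1$; both correctly identify $k\le j-1$ as the constraint needed for the second identity.
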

Note that the $j=k$ case of \eqref{prop-b2} (taking $z\mapsto z/q$) is the standard fact in \cite[Equation~(I.13)]{GR}.
\begin{proof}
For $0\leq k\leq j$,
\[
\frac{(1/z)_{i}(qz)_j}{(q^{-k}/z)_i}=\frac{(q^{i-k}/z)_k(qz)_{j}}{(q^{-k}/z)_{k}}
=\frac{(-1/z)^kq^{ik-\binom{k+1}{2}}(q^{1-i}z)_k(qz)_j}{(-1/z)^{k}q^{-\binom{k+1}{2}}(qz)_k}
=q^{ik}(q^{1-i}z)_k(q^{k+1}z)_{j-k}.
\]
Taking $z\mapsto z/q$ and $k\mapsto k+1$ in \eqref{prop-b2} yields
\eqref{prop-a} for $-1\leq k\leq j-1$.
Hence, both \eqref{prop-b2} and \eqref{prop-a} hold simultaneously for $0\leq k\leq j-1$.
\end{proof}

In \cite{cai}, Cai showed that $F(a,w)$ admits the following partial fraction expansion.
\begin{prop}
Let $F(a,w)$ be defined as in \eqref{d-F}. Then
\begin{equation}\label{e-Fsplit}
F(a,w)=\sum_{i=1}^{n}\sum_{j=0}^{a_i-1}\frac{A_{ij}}{1-q^jx_i/w_1},
\end{equation}
where
\begin{multline}\label{A}
A_{ij}=\frac{1}
{(q^{-j})_j(q)_{a_i-j-1}}\prod_{\substack{1\leq v<u\leq n\\v,u\neq i}}
(x_v/x_u)_{a_v}(qx_u/x_v)_{a_u}
\prod_{u=1}^n\prod_{v=2}^s(x_u/w_v)_{a_u}^{-1}
\\
\times
\prod_{l=1}^{i-1}q^{ja_l}\big(q^{1-a_l}x_i/x_l\big)_j\big(q^{j+1}x_i/x_l\big)_{a_i-j}
\prod_{l=i+1}^{n}q^{(j+1)a_l}\big(q^{-a_l}x_i/x_l\big)_{j+1}\big(q^{j+1}x_i/x_l\big)_{a_i-j-1}.
\end{multline}
\end{prop}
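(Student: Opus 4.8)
The plan is to regard $F(a,w)$ as a rational function of the single variable $u:=1/w_1$ and to read off \eqref{e-Fsplit} as its partial fraction expansion in $u$. The only dependence of $F(a,w)$ in \eqref{d-F} on $w_1$ sits in the factors $\prod_{i=1}^n(x_i/w_1)_{a_i}^{-1}=\prod_{i=1}^n\prod_{k=0}^{a_i-1}(1-x_iq^ku)^{-1}$, so I would first write $F(a,w)=C\prod_{i=1}^n\prod_{k=0}^{a_i-1}(1-x_iq^ku)^{-1}$, where
\[
C:=\prod_{1\le v<u\le n}(x_v/x_u)_{a_v}(qx_u/x_v)_{a_u}\prod_{u=1}^n\prod_{v=2}^s(x_u/w_v)_{a_u}^{-1}
\]
is independent of $w_1$. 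As a function of $u$ this is a proper rational function (numerator of degree $0$, denominator of degree $|a|$) that vanishes as $u\to\infty$, and for generic $x_1,\dots,x_n$ its poles, located at $u=q^{-j}/x_i$ for $1\le i\le n$ and $0\le j\le a_i-1$, are simple and pairwise distinct. Hence $F(a,w)$ has a partial fraction expansion of exactly the shape \eqref{e-Fsplit}, and the coefficient is the residue factor
\[
A_{ij}=\bigl.(1-q^jx_iu)\,F(a,w)\bigr|_{u=q^{-j}/x_i}.
\]

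Second, I would evaluate this residue by substituting $u=q^{-j}/x_i$ into every surviving factor. The factors carrying index $i$ produce, after deleting the pole factor $(1-x_iq^ju)$, the product $\prod_{0\le m\le a_i-1,\,m\neq j}(1-q^{m-j})^{-1}$; splitting this at $m<j$ and $m>j$ recognises it as $1/\bigl((q^{-j})_j(q)_{a_i-j-1}\bigr)$, the scalar prefactor in \eqref{A}. Each factor carrying an index $l\neq i$ becomes $(q^{-j}x_l/x_i)_{a_l}^{-1}$, which I then combine with the matching pair factor pulled out of $C$.

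Third comes the step where Lemma~\ref{prop-c} is used. For $l<i$ the relevant pair factor of $C$ is $(x_l/x_i)_{a_l}(qx_i/x_l)_{a_i}$; writing $z=x_i/x_l$ and combining with $(q^{-j}x_l/x_i)_{a_l}^{-1}$ gives $(1/z)_{a_l}(qz)_{a_i}/(q^{-j}/z)_{a_l}$, which \eqref{prop-b2}, applied with the lemma's indices $i\mapsto a_l$, $j\mapsto a_i$, $k\mapsto j$, turns into $q^{ja_l}(q^{1-a_l}x_i/x_l)_j(q^{j+1}x_i/x_l)_{a_i-j}$. For $l>i$ the pair factor is $(x_i/x_l)_{a_i}(qx_l/x_i)_{a_l}$, and the same combination gives $(z)_{a_i}(q/z)_{a_l}/(q^{-j}/z)_{a_l}$, which \eqref{prop-a} turns into $q^{(j+1)a_l}(q^{-a_l}x_i/x_l)_{j+1}(q^{j+1}x_i/x_l)_{a_i-j-1}$. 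Assembling the diagonal prefactor, the untouched $w_2,\dots,w_s$ factors, the pairs of $C$ not involving $i$, and these two products over $l<i$ and $l>i$ reproduces \eqref{A} verbatim. The range constraint $0\le k\le j-1$ of Lemma~\ref{prop-c} becomes, under this substitution, precisely $0\le j\le a_i-1$, so both applications are legitimate.

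The routine parts are the degree count guaranteeing properness and the residue bookkeeping; the genuinely delicate point is matching the two cases of Lemma~\ref{prop-c} to the two ranges $l<i$ and $l>i$, in particular tracking the asymmetric exponents $j$ versus $j+1$ and the shifted lengths $a_i-j$ versus $a_i-j-1$ that distinguish \eqref{prop-b2} from \eqref{prop-a}. I would also record the genericity assumption on the $x_i$ needed for the poles in $u$ to be simple, noting that \eqref{e-Fsplit} is an identity of rational functions and therefore holds in general by continuity once it is established for generic $x_i$.
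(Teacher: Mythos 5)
Your proposal is correct and follows essentially the same route as the paper: partial fraction decomposition in $w_1$ (your $u=1/w_1$ is only a cosmetic reparametrisation), extraction of the residue at $w_1=q^jx_i$ giving the prefactor $1/\bigl((q^{-j})_j(q)_{a_i-j-1}\bigr)$, and then Lemma~\ref{prop-c} applied with $(i,j,k,z)\mapsto(a_l,a_i,j,x_i/x_l)$ to handle the $l<i$ and $l>i$ pair factors via \eqref{prop-b2} and \eqref{prop-a} respectively. Your additional remarks on properness, simplicity of the poles for generic $x_i$, and continuity are sound and merely make explicit what the paper leaves implicit.
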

Note that $A_{ij}$ is a power series in $x_i$.
\begin{proof}
By partial fraction decomposition of $F(a,w)$ with respect to $w_1$,
we can rewrite $F(a,w)$ as \eqref{e-Fsplit} and
\begin{equation}\label{subs-b}
A_{ij}=F(a,w)(1-q^jx_i/w_1)|_{w_1=q^jx_i}.
\end{equation}
Carrying out the substitution $w_1=q^jx_i$ in $F(a,w)(1-q^jx_i/w_1)$ yields
\begin{multline}\label{e-Bij}
A_{ij}=
\frac{1}{(q^{-j})_j(q)_{a_i-j-1}}
\prod_{l=1}^{i-1}\frac{(x_l/x_i)_{a_l}(qx_i/x_l)_{a_i}}{(q^{-j}x_l/x_i)_{a_l}}
\prod_{l=i+1}^n\frac{(x_i/x_l)_{a_i}(qx_l/x_i)_{a_l}}{(q^{-j}x_l/x_i)_{a_l}}
\\
\times
\prod_{\substack{1\leq v<u\leq n\\v,u\neq i}}
(x_v/x_u)_{a_v}(qx_u/x_v)_{a_u}
\prod_{u=1}^n\prod_{v=2}^s(x_u/w_v)_{a_u}^{-1}.
\end{multline}

Using \eqref{e-ab} with $(i,j,k,z)\mapsto (a_l,a_i,j,x_i/x_l)$,
we have
\begin{subequations}\label{e-B}
\begin{equation}
\frac{(x_l/x_i)_{a_l}(qx_i/x_l)_{a_i}}{(q^{-j}x_l/x_i)_{a_l}}
=q^{ja_l}\big(q^{1-a_l}x_i/x_l\big)_j\big(q^{j+1}x_i/x_l\big)_{a_i-j},
\end{equation}
and
\begin{equation}
\frac{(x_i/x_l)_{a_i}(qx_l/x_i)_{a_l}}{(q^{-j}x_l/x_i)_{a_l}}
=q^{(j+1)a_l}\big(q^{-a_l}x_i/x_l\big)_{j+1}\big(q^{j+1}x_i/x_l\big)_{a_i-j-1}
\end{equation}
respectively.
\end{subequations}
Substituting \eqref{e-B} into \eqref{e-Bij} we obtain \eqref{A}.
\end{proof}

\section{An inductive formula for $D_{v,v^+}(a)$}\label{sec-inductive}

In this section, we utilize Cai's splitting formula \eqref{e-Fsplit} for $F(a,w)$
to obtain an inductive formula for $D_{v,v^+}(a)$.

We begin by giving an easy result deduced from the $q$-binomial theorem.
\begin{prop}\label{prop-sum}
Let $n$ and $t$ be non-negative integers. Then
\begin{equation}\label{e-sum}
\sum_{k=0}^{t}
\frac{q^{k(n-t)}}
{(q^{-k})_{k}(q)_{t-k}}
=\qbinom{n}{t}.
\end{equation}
\end{prop}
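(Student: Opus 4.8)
The identity to prove is
\[
\sum_{k=0}^{t}
\frac{q^{k(n-t)}}
{(q^{-k})_{k}(q)_{t-k}}
=\qbinom{n}{t}.
\]
My plan is to rewrite the summand so that each term becomes a $q$-binomial coefficient times a power of $q$, and then recognize the resulting sum as an instance of the $q$-binomial theorem (or, equivalently, the $q$-Vandermonde/$q$-Chu summation). The first obstacle is cosmetic: the factor $(q^{-k})_k$ in the denominator carries negative powers of $q$, so I would begin by simplifying it. Using the standard reversal $(q^{-k})_k=\prod_{i=0}^{k-1}(1-q^{i-k})=(-1)^k q^{-\binom{k+1}{2}}(q)_k$, the denominator $(q^{-k})_k(q)_{t-k}$ turns into $(-1)^kq^{-\binom{k+1}{2}}(q)_k(q)_{t-k}$. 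Multiplying numerator and denominator by $(q)_t$ then lets me express $1/\big((q)_k(q)_{t-k}\big)=\qbinom{t}{k}/(q)_t$, so the whole summand becomes a $q$-binomial coefficient $\qbinom{t}{k}$ weighted by an explicit sign and power of $q$.

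Concretely, I expect the sum to take the shape
\[
\frac{1}{(q)_t}\sum_{k=0}^{t}(-1)^k q^{\binom{k+1}{2}+k(n-t)}\qbinom{t}{k},
\]
after collecting the exponents. The key step is to evaluate this inner sum in closed form. For this I would invoke the finite $q$-binomial theorem in the form
\[
\sum_{k=0}^{t}(-1)^k q^{\binom{k}{2}}\qbinom{t}{k}z^{k}=(z;q)_t,
\]
which is \cite[Equation (I.13)]{GR} (already cited in the excerpt after Lemma~\ref{prop-c}). After matching the exponent $\binom{k+1}{2}=\binom{k}{2}+k$, the extra power of $q$ combines with $q^{k(n-t)}$ into a single $z^k$ with $z=q^{\,n-t+1}$, so the inner sum evaluates to $(q^{\,n-t+1};q)_t$. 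The main obstacle is purely bookkeeping: reconciling the two exponent conventions ($\binom{k+1}{2}$ versus $\binom{k}{2}$) and tracking the shift in $z$ so that the specialization lands on exactly $q^{\,n-t+1}$.

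Finally, I would divide by $(q)_t$ and recognize the result as the $q$-binomial coefficient. Since by definition $\qbinom{n}{t}=(q^{\,n-t+1})_t/(q)_t$, the evaluation $(q^{\,n-t+1};q)_t/(q)_t$ is precisely $\qbinom{n}{t}$, which is the claimed right-hand side. As a sanity check I would verify the small cases $t=0$ (both sides equal $1$) and $t=1$ (the left-hand side is $q^{\,n-1}/(q^{-1})_1(q)_0+1/(q)_1$, which should collapse to $(1-q^n)/(1-q)=\qbinom{n}{1}$), since these confirm that the sign and exponent manipulations have been carried out correctly before trusting the general case.
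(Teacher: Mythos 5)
Your proposal is correct and follows essentially the same route as the paper: both rewrite $(q^{-k})_k=(-1)^kq^{-\binom{k+1}{2}}(q)_k$ to bring the sum into the form $\frac{1}{(q)_t}\sum_{k=0}^t q^{\binom{k}{2}}\qbinom{t}{k}(-q^{n-t+1})^k$ and then apply the finite $q$-binomial theorem with $z=q^{n-t+1}$ to obtain $(q^{n-t+1})_t/(q)_t=\qbinom{n}{t}$. The only difference is cosmetic (the paper cites Andrews' \emph{Theory of Partitions} for the $q$-binomial theorem rather than Gasper--Rahman), so no changes are needed.
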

\begin{proof}
We can rewrite the left-hand side of \eqref{e-sum} as
\begin{equation}\label{e-sum2}
\sum_{k=0}^{t}
\frac{(-1)^kq^{k(n-t)+\binom{k+1}{2}}}{(q)_{k}(q)_{t-k}}
=\frac{1}{(q)_t} \sum_{k=0}^{t}q^{\binom{k}{2}}\qbinom{t}{k}(-q^{n-t+1})^k.
\end{equation}
Using the well-known $q$-binomial theorem \cite[Theorem 3.3]{andrew-qbinomial}
\[
(z)_t=\sum_{k=0}^tq^{\binom{k}{2}}\qbinom{t}{k}(-z)^k
\]
with $z\mapsto q^{n-t+1}$, we have
\[
\frac{1}{(q)_t} \sum_{k=0}^{t}q^{\binom{k}{2}}\qbinom{t}{k}(-q^{n-t+1})^k
=(q^{n-t+1})_t/(q)_t=\qbinom{n}{t}. \qedhere
\]
\end{proof}

By the splitting formula for $F(a,w)$, we obtain the following inductive formula for
$D_{v,v^+}(a)$.
\begin{lem}\label{lem-ind}
Let $v=(v_1,\dots,v_n)$ be a non-zero composition such that its largest part equals $r$, and set $I:=\big\{i\in \{1,2,\dots,n\}: v_i=r\big\}$.
Then
\begin{equation}\label{e-ind}
D_{v,v^+}(a)=\sum_{i\in I} q^{\sum_{j=i+1}^na_j}\qbinom{|a|+r-1}{a_{i}-1}D_{v^{(i)},(v^{(i)})^+}\big(a^{(i)}\big).
\end{equation}
\end{lem}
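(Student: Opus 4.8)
The plan is to compute $D_{v,v^+}(a)=\CT_{x,w}x^{-v}w^{v^+}F(a,w)$ by substituting Cai's splitting formula \eqref{e-Fsplit} for $F(a,w)$. Since $v$ is a composition with $|v|=|v^+|$, the length $s$ equals $\ell(v^+)=\ell(v)$, and the relevant variable here is $w_1$, which by convention pairs with the largest part of $v^+$, namely $r$. The key observation is that the power of $w_1$ we must extract is $r$, the top part, so we first expand each summand $A_{ij}/(1-q^jx_i/w_1)$ as a geometric series in $w_1^{-1}$ and pick off the coefficient of $w_1^{r}$. This coefficient is $(q^jx_i)^{r}A_{ij}$, and the remaining factors of $A_{ij}$ involve only $w_2,\dots,w_s$, which precisely assemble into an $(s-1)$-variable constant-term problem of the same shape but with $r$ removed from the partition.

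First I would carry out the $w_1$-extraction to reduce $\CT_{x,w}x^{-v}w^{v^+}F(a,w)$ to a sum over $i\in\{1,\dots,n\}$ and $j\in\{0,\dots,a_i-1\}$ of $q^{jr}\CT_{x,w'}x^{-v}x_i^{r}\,w'^{\mu}A_{ij}$, where $w'=(w_2,\dots,w_s)$ and $\mu=(v^+)^{(1)}=(v^+_2,v^+_3,\dots)$ is $v^+$ with its first (largest) part deleted. The crucial structural point is that extracting $x_i^{r}$ together with the factor $x^{-v}$ shifts the $x_i$-exponent: the net exponent demanded of $x_i$ becomes $-(v_i-r)$, which is nonnegative, and in fact the whole configuration $A_{ij}$ with the prefactor looks like the $F$-function for the alphabet $a^{(i)}$ obtained by deleting $a_i$. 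Here I would use Lemma~\ref{prop-c} (already applied inside the derivation of \eqref{A}) to recognize that after taking the constant term in $x_i$, the $j$-dependence collapses: the factors $\prod_{l<i}(q^{1-a_l}x_i/x_l)_j(\cdots)$ and $\prod_{l>i}(\cdots)$, together with the $1/((q^{-j})_j(q)_{a_i-j-1})$ weight, contribute a sum over $j$ that is exactly the shape summed in Proposition~\ref{prop-sum}.

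The heart of the argument, and the step I expect to be the main obstacle, is showing that for each fixed $i$ the inner sum over $j$ produces the clean factor $q^{\sum_{l>i}a_l}\qbinom{|a|+r-1}{a_i-1}$ and that what remains is exactly $D_{v^{(i)},(v^{(i)})^+}(a^{(i)})$. Concretely, after taking $\CT_{x_i}$ one is left with a sum of the form $\sum_{j=0}^{a_i-1} q^{j\cdot(\text{something})}/((q^{-j})_j(q)_{a_i-j-1})$ times a $j$-independent residual constant term; I would match the exponent of $q$ carefully—tracking the $q^{jr}$ from the $w_1$-extraction, the $q^{ja_l}$ and $q^{(j+1)a_l}$ factors from \eqref{e-B}, and the shift $v_i\mapsto v_i-r=0$ (so that $i$ ranges only over $I$, since for $i\notin I$ the required $x_i$-coefficient is inaccessible and vanishes)—so that $n-t$ in Proposition~\ref{prop-sum} becomes $|a|+r-1-(a_i-1)$ with $t=a_i-1$, giving the $q$-binomial $\qbinom{|a|+r-1}{a_i-1}$. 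The delicate bookkeeping is twofold: verifying that only indices $i\in I$ survive (because for $v_i<r$ the monomial in $x_i$ cannot match), and confirming that the leftover $q$-power coming from the $l>i$ products telescopes precisely to $q^{\sum_{j=i+1}^n a_j}$ after the Proposition~\ref{prop-sum} summation absorbs the rest. Once these exponent and index-set matchings are pinned down, the residual constant term is manifestly the definition of $D_{v^{(i)},(v^{(i)})^+}(a^{(i)})$, and summing over $i\in I$ yields \eqref{e-ind}.
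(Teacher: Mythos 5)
Your proposal is correct and follows essentially the same route as the paper: substitute Cai's splitting formula into $\CT_{x,w}x^{-v}w^{v^+}F(a,w)$, extract the $w_1$-constant term to produce $q^{jr}x_i^r$, observe that only $i\in I$ survives because $A_{ij}$ is a power series in $x_i$, set $x_i=0$ so the Pochhammer factors collapse, and apply Proposition~\ref{prop-sum} with $t=a_i-1$ and $n\mapsto|a|+r-1$ to obtain the $q$-binomial and the leftover factor $q^{\sum_{j>i}a_j}$. All the parameter matchings you flag as "delicate bookkeeping" come out exactly as you predict.
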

\begin{proof}
Substituting the splitting formula \eqref{e-Fsplit} for $F(a,w)$ into
\begin{equation}\label{e-trans}
D_{v,v^+}(a)=\CT_{x,w} \frac{w^{v^+}}{x^v}F(a,w),
\end{equation}
we have
\[
D_{v,v^+}(a)=\sum_{i=1}^{n}\sum_{j=0}^{a_i-1}\CT_{x,w}\frac{w^{v^+}}{x^v}
\frac{A_{ij}}{1-q^jx_i/w_1},
\]
where we take $w=(w_1,\dots,w_n)$.
Taking the constant term with respect to $w_1$, we obtain
\begin{equation}\label{e-ind1}
D_{v,v^+}(a)=\sum_{i=1}^{n}\sum_{j=0}^{a_i-1}\CT_{x,w^{(1)}}\frac{q^{j\mu_1}x_i^{\mu_1}w_2^{\mu_2}
\cdots w_n^{\mu_n}}{x^v}A_{ij}
=\sum_{i=1}^{n}\sum_{j=0}^{a_i-1}\CT_{x,w^{(1)}}\frac{q^{jr}x_i^{r}w_2^{\mu_2}
\cdots w_n^{\mu_n}}{x^v}A_{ij},
\end{equation}
where $\mu:=v^+=(\mu_1,\dots,\mu_n)$ and $\mu_1=\max\{v\}=r$.
Since $A_{ij}$ is a power series in $x_i$ for all $i$,
\[
\CT_{x,w^{(1)}}\frac{q^{jr}x_i^rw_2^{\mu_2}\cdots w_n^{\mu_n}}
{x^v}A_{ij}=0 \quad \text{for $i\notin I$,}
\]
and
\[
\CT_{x,w^{(1)}}\frac{q^{jr}x_i^rw_2^{\mu_2}\cdots w_n^{\mu_n}}
{x^v}A_{ij}=\CT_{x^{(i)},w^{(1)}}\frac{q^{jr}w_2^{\mu_2}\cdots w_n^{\mu_n}}
{x_1^{v_1}\cdots x_{i-1}^{v_{i-1}}x_{i+1}^{v_{i+1}}\cdots x_n^{v_n}}A_{ij}|_{x_i=0}
\quad \text{for $i\in I$.}
\]
Hence, \eqref{e-ind1} reduces to
\begin{equation*}
D_{v,v^+}(a)=\sum_{i\in I}\sum_{j=0}^{a_i-1}\CT_{x^{(i)},w^{(1)}}
\frac{q^{jr}w_2^{\mu_2}\cdots w_n^{\mu_n}}
{x_1^{v_1}\cdots x_{i-1}^{v_{i-1}}x_{i+1}^{v_{i+1}}\cdots x_n^{v_n}}A_{ij}|_{x_i=0}.
\end{equation*}
By the expression \eqref{A} for $A_{ij}$ and by carrying out the substitution $x_i=0$ in $A_{ij}$, we obtain
\begin{multline*}
D_{v,v^+}(a)=\sum_{i\in I}\sum_{j=0}^{a_i-1}\frac{q^{j(a_1+\cdots+a_{i-1}+r)+(j+1)(a_{i+1}+\cdots+a_n)}}
{(q^{-j})_j(q)_{a_i-j-1}}\\
\times\CT_{x^{(i)},w^{(1)}}\frac{w_2^{\mu_2}\cdots w_n^{\mu_n}}{x_1^{v_1}\cdots x_{i-1}^{v_{i-1}}x_{i+1}^{v_{i+1}}\cdots x_n^{v_n}}
\prod_{\substack{1\leq v<u\leq n\\v,u\neq i}}
(x_v/x_u)_{a_v}(qx_u/x_v)_{a_u}
\prod_{\substack{u=1\\u\neq i}}^n\prod_{v=2}^n(x_u/w_v)_{a_u}^{-1}.\\
\end{multline*}
Using \eqref{e-sum} with $k\mapsto j, t\mapsto a_i-1$ and $n\mapsto |a|+r-1$,
we find
\begin{multline*}
D_{v,v^+}(a)=
\sum_{i\in I}q^{a_{i+1}+\cdots+a_n}\qbinom{|a|+r-1}{a_i-1}
\CT_{x^{(i)},w^{(1)}}
\frac{w_2^{\mu_2}\cdots w_n^{\mu_n}}
{x_1^{v_1}\cdots x_{i-1}^{v_{i-1}}x_{i+1}^{v_{i+1}}\cdots x_n^{v_n}}\\
\times\prod_{\substack{1\leq v<u\leq n\\v,u\neq i}}
(x_v/x_u)_{a_v}(qx_u/x_v)_{a_u}
\prod_{\substack{u=1\\u\neq i}}^n\prod_{v=2}^n(x_u/w_v)_{a_u}^{-1}.
\end{multline*}
By \eqref{e-trans} again with $a\mapsto a^{(i)}, v\mapsto v^{(i)}, x\mapsto x^{(i)}$ and
$w\mapsto w^{(1)}$, the constant term in the above sum equals $D_{v^{(i)},(v^{(i)})^+}\big(a^{(i)}\big)$,
completing the proof.
\end{proof}

\section{Preliminaries for the proof of Theorem~\ref{Thm-rec}}\label{sec-preliminaries}

In this section, we prepare some results used in the proof of Theorem~\ref{Thm-rec}.

\begin{prop}
Let $I$ be a non-empty subset of $\{1,2,\dots,n\}$ and $J$ be a non-empty subset of $I$.
Let $L_{I, J}(a)$ be defined as in \eqref{def-L}. Then, for an element $i\in J$,
\begin{equation}\label{e-keyeq}
L_{I\setminus\{i\}, J\setminus\{i\}}\big(a^{(i)}\big)=L_{I, J}(a)-\sum_{j=i}^na_j
+\sum_{\substack{j\in J\\j\geq i}}a_j.
\end{equation}
\end{prop}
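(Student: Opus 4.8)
The plan is to prove the identity \eqref{e-keyeq} by directly analyzing how the defining double sum in \eqref{def-L} changes when we delete the index $i$ from both $I$ and $J$ and simultaneously pass from the sequence $a$ to $a^{(i)}$. Recall that
\[
L_{I,J}(a)=\sum_{\substack{1\leq i'\leq j\leq n\\ i'\in I,\ j\notin J}}a_j.
\]
The main subtlety is that $a^{(i)}$ reindexes the entries: the $j$th entry of $a^{(i)}$ equals $a_j$ for $j<i$ but equals $a_{j+1}$ for $j\geq i$. So I must be careful to translate the summation ranges of $L_{I\setminus\{i\},J\setminus\{i\}}(a^{(i)})$ back into sums over the original entries $a_1,\dots,a_n$ before comparing with $L_{I,J}(a)$.

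First I would rewrite $L_{I,J}(a)$ in a form that exposes its dependence on $i$. The natural move is to observe that, for a fixed $a$, the quantity $L_{I,J}(a)$ is a sum of $a_j$ over certain pairs, and the value contributed by each fixed $j\notin J$ is $a_j$ times the number of $i'\in I$ with $i'\leq j$ (equivalently, $a_j$ appears whenever some element of $I$ lies at or below position $j$). Thus I would set up the proof by grouping the double sum according to the outer index: write
\[
L_{I,J}(a)=\sum_{j\notin J}\bigl(\#\{i'\in I: i'\leq j\}\bigr)\,a_j .
\]
This coefficient reformulation makes the effect of deleting $i$ transparent, because deleting $i$ from $I$ decreases the coefficient of each $a_j$ with $j\geq i$ by exactly one, while deleting $i$ from $J$ removes the constraint $j\notin J$ at $j=i$, reinstating the term $a_i$ with its coefficient.

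The heart of the argument is then a bookkeeping comparison. On the $a^{(i)}$ side, I would carefully match each surviving term of $L_{I\setminus\{i\},J\setminus\{i\}}(a^{(i)})$ with the corresponding term of $L_{I,J}(a)$, tracking the index shift across position $i$. The two correction terms on the right-hand side of \eqref{e-keyeq} should arise naturally: the term $-\sum_{j=i}^n a_j$ accounts for the loss of one unit in the coefficient of every $a_j$ with $j\geq i$ caused by removing $i$ from $I$, while the term $+\sum_{j\in J,\,j\geq i}a_j$ restores exactly those $a_j$ that were wrongly subtracted but which are indexed by elements of $J$ and hence were never present in $L_{I,J}(a)$ (since the outer sum requires $j\notin J$). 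The main obstacle I expect is not conceptual but combinatorial precision: I must verify that the index relabeling induced by passing from $a$ to $a^{(i)}$ does not introduce spurious or missing boundary terms at the deleted position $i$ itself, and that the condition $i\in J$ (so that $i$ is genuinely removed from both sets) is used correctly. Once this accounting is checked term by term, the identity \eqref{e-keyeq} follows immediately.
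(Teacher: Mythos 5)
Your proposal is correct and amounts to the same direct bookkeeping on the defining double sum that the paper carries out: regrouping by the outer index $j$ and decrementing the coefficient for $j\geq i$ is exactly the paper's step $L_{I\setminus\{i\},J}(a)=L_{I,J}(a)-\sum_{j=i,\,j\notin J}^{n}a_j$, and the cancellation you flag at position $i$ (the term $a_i$ reinstated by removing $i$ from $J$ versus its removal from the alphabet $a^{(i)}$) is the paper's observation that for $i\in J$ the conditions ``$v\notin J\setminus\{i\}$ and $v\neq i$'' collapse to ``$v\notin J$''. Carrying out that one boundary check explicitly would complete the argument.
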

Note that if $I$ is a one-element subset of $\{1,2,\dots,n\}$, then both sides of \eqref{e-keyeq} vanish.
\begin{proof}
By the definition of $L_{I,J}(a)$ in \eqref{def-L} with $I\mapsto I\setminus\{i\}$,
$J\mapsto J\setminus\{i\}$ and $a\mapsto a^{(i)}$, we have
\[
L_{I\setminus\{i\},J\setminus\{i\}}\big(a^{(i)}\big)
=\sum_{\substack{1\leq u\leq v\leq n\\[1pt] u\in I\setminus\{i\}, v\notin J\setminus\{i\},v\neq i}}a_v.
\]
Hence, for $i\in J$
\[
L_{I\setminus\{i\},J\setminus\{i\}}\big(a^{(i)}\big)=
\sum_{\substack{1\leq u\leq v\leq n\\[1pt] u\in I\setminus\{i\}, v\notin J}}a_v
=L_{I\setminus\{i\},J}(a).
\]
We can rewrite $L_{I\setminus\{i\},J}(a)$ as
\[
L_{I,J}(a)-\sum_{\substack{j=i\\j\notin J}}^na_j.
\]
It is not hard to see that this equals the right-hand side of \eqref{e-keyeq}.
\end{proof}

The next two lemmas concern sums related to $L_{I,J}(a)$.
In these sums, large cancellations occur and the sums reduce to one or two terms.
\begin{lem}\label{lem-a}
Let $I$ be a subset of $\{1,2,\dots,n\}$ of cardinality at least two.
Then, for an element $i\in I$,
\begin{equation}\label{e-1}
\sum_{\emptyset \neq J\subseteq I\setminus \{i\}}
(-1)^{|J|}q^{L_{I\setminus\{i\},J}(a^{(i)})+a_{J}}
=-q^{L_{I,\{i\}}(a)-\sum_{j=i+1}^na_j},
\end{equation}
where $a_S:=\sum_{j\in S}a_j$ and $L_{I,J}(a)$ is defined as in \eqref{def-L}.
\end{lem}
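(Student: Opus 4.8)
The plan is to exploit the double-sum (combinatorial) form of $L$ to turn the signed sum over $J$ into a product, which then collapses because one factor vanishes.

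First I would record the explicit form of $L_{I\setminus\{i\},J}\big(a^{(i)}\big)$ obtained exactly as in the proof of the preceding proposition: transporting the definition of $L$ to the deleted alphabet $a^{(i)}$ on the ground set $\{1,\dots,n\}\setminus\{i\}$ gives
\[
L_{I\setminus\{i\},J}\big(a^{(i)}\big)=\sum_{\substack{1\le u\le v\le n\\ u\in I\setminus\{i\},\ v\notin J,\ v\neq i}}a_v
=\sum_{\substack{v\neq i\\ v\notin J}}c_v\,a_v,
\]
where for $v\neq i$ I set $c_v:=\big|\{u\in I\setminus\{i\}:u\le v\}\big|$, the number of elements of $I\setminus\{i\}$ that do not exceed $v$. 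Since every $J$ appearing in the sum is a subset of $I\setminus\{i\}$, each $v\in J$ satisfies $v\neq i$; writing $T:=\sum_{v\neq i}c_v\,a_v$ I then get the clean splitting
\[
L_{I\setminus\{i\},J}\big(a^{(i)}\big)+a_J=T+\sum_{v\in J}(1-c_v)\,a_v.
\]

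This is the crucial point: the exponent is \emph{additive} over $J$, so $q^{L_{I\setminus\{i\},J}(a^{(i)})+a_J}=q^{T}\prod_{v\in J}q^{(1-c_v)a_v}$ is multiplicative in $J$. Summing over all subsets of $I\setminus\{i\}$ and subtracting the empty one would yield
\[
\sum_{\emptyset\neq J\subseteq I\setminus\{i\}}(-1)^{|J|}q^{L_{I\setminus\{i\},J}(a^{(i)})+a_J}
=q^{T}\left(\prod_{v\in I\setminus\{i\}}\big(1-q^{(1-c_v)a_v}\big)-1\right).
\]
Now I would observe the vanishing: if $j_0$ denotes the smallest element of $I\setminus\{i\}$ (which is non-empty because $|I|\ge 2$), then $c_{j_0}=1$, so the factor $1-q^{(1-c_{j_0})a_{j_0}}=1-q^{0}=0$ and the entire product is zero. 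Hence the left-hand side collapses to $-q^{T}$.

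Finally I would identify $T$ with the target exponent. For $v\neq i$ the rank of $v$ in $I$ exceeds its rank $c_v$ in $I\setminus\{i\}$ by exactly $1$ when $v>i$ and agrees with it when $v<i$, so $\big|\{u\in I:u\le v\}\big|=c_v+1$ for $v>i$ and $=c_v$ for $v<i$. Since $L_{I,\{i\}}(a)=\sum_{v\neq i}\big|\{u\in I:u\le v\}\big|\,a_v$, this gives $L_{I,\{i\}}(a)=T+\sum_{v>i}a_v=T+\sum_{j=i+1}^{n}a_j$, that is $T=L_{I,\{i\}}(a)-\sum_{j=i+1}^{n}a_j$, which is precisely the claim \eqref{e-1}. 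The only delicate step is the bookkeeping in the first paragraph—correctly transporting $L$ to the alphabet $a^{(i)}$ and verifying that every $v\in J$ is $\neq i$ so that the baseline $T$ factors out cleanly; once the exponent is written additively, the collapse of the product through the rank-one factor is immediate.
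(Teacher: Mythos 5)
Your proof is correct, and it takes a genuinely different route from the paper's. The paper isolates the least element $k$ of $I\setminus\{i\}$ and runs an explicit involution: it pairs each admissible $J$ with $J\cup\{k\}$, checks that $L_{I\setminus\{i\},J\cup\{k\}}(a^{(i)})+a_{J\cup\{k\}}=L_{I\setminus\{i\},J}(a^{(i)})+a_J$ so the paired terms cancel, and then identifies the single surviving term $-q^{L_{I\setminus\{i\},\{k\}}(a^{(i)})+a_k}$ with the right-hand side. You instead observe that the exponent is an affine function of $J$ — namely $T+\sum_{v\in J}(1-c_v)a_v$ with $c_v$ the rank of $v$ in $I\setminus\{i\}$ — so the full alternating sum over all subsets factors as $q^{T}\prod_{v\in I\setminus\{i\}}\bigl(1-q^{(1-c_v)a_v}\bigr)$, which vanishes because the smallest element has rank $1$; subtracting the empty-set term leaves $-q^{T}$, and your rank bookkeeping correctly gives $T=L_{I,\{i\}}(a)-\sum_{j=i+1}^{n}a_j$. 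The two arguments exploit the same underlying cancellation (your vanishing factor at $j_0$ \emph{is} the paper's pairing by $J\mapsto J\triangle\{j_0\}$), but your product form is more systematic: it explains all the cancellation at once, makes transparent why the surviving exponent is the $J=\emptyset$ baseline $T$, and avoids the paper's separate verification that $L_{I\setminus\{i\},\{k\}}(a^{(i)})+a_k=L_{I\setminus\{i\},\emptyset}(a^{(i)})$. The one step worth stating explicitly in a write-up is the transport of the definition of $L$ to the deleted alphabet $a^{(i)}$ (your formula $L_{I\setminus\{i\},J}(a^{(i)})=\sum_{v\neq i,\,v\notin J}c_v a_v$), which you have done correctly and which matches the identity established in the paper's preceding proposition.
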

\begin{proof}
Denote by $k$ the least element in $I\setminus\{i\}$.
We can then rewrite the left-hand side of \eqref{e-1} as
\begin{equation}\label{e-a}
\sum_{\substack{J\subseteq I\setminus \{i\}
\\J \neq \emptyset \ \text{and}\ k\notin J}}
\Big((-1)^{|J|}q^{L_{I\setminus\{i\},J}(a^{(i)})+a_{J}}
+(-1)^{|J\cup \{k\}|}q^{L_{I\setminus\{i\},J\cup \{k\}}(a^{(i)})+a_{J\cup \{k\}}}
\Big)
-q^{L_{I\setminus\{i\},\{k\}}(a^{(i)})+a_k}.
\end{equation}
Note that if there is no $J\subseteq I\setminus \{i\}$ such that $J\neq \emptyset$
and $k\notin J$, then we set the above sum to be zero.
We complete the proof by showing that the sum in \eqref{e-a} equals zero and
\begin{equation}\label{e-trans2}
L_{I\setminus\{i\},\{k\}}\big(a^{(i)}\big)+a_k=L_{I,\{i\}}(a)-\sum_{j=i+1}^na_j.
\end{equation}

To show that the sum in \eqref{e-a} equals zero, it suffices to show that
\begin{equation}\label{e-cupk}
L_{I\setminus\{i\},J}\big(a^{(i)}\big)+a_{J}=L_{I\setminus\{i\},J\cup \{k\}}\big(a^{(i)}\big)+a_{J\cup \{k\}}
\end{equation}
for $J\subseteq I\setminus \{i\}$, $J\neq \emptyset$ and $k\notin J$.
Since $J\subseteq I\setminus \{i\}$, $k\notin J$ and $k$ is the least element in $I\setminus\{i\}$, the integer
$k$ is smaller than any element of $J$. Together with the definition of $L_{I,J}(a)$ in \eqref{def-L}, we find
\[
L_{I\setminus\{i\},J\cup \{k\}}\big(a^{(i)}\big)=L_{I\setminus\{i\},J}\big(a^{(i)}\big)-a_k.
\]
Substituting this into the right-hand side of \eqref{e-cupk} and using
$a_{J\cup \{k\}}=a_{J}+a_k$, we obtain the left-hand side of \eqref{e-cupk}.

Since $k$ is the least element in $I\setminus\{i\}$, by the definition of $L_{I,J}(a)$
\[
L_{I\setminus\{i\},\{k\}}\big(a^{(i)}\big)+a_k
=L_{I\setminus\{i\},\emptyset}\big(a^{(i)}\big).
\]
It is easy to see that $L_{I\setminus\{i\},\emptyset}\big(a^{(i)}\big)$
can be written as $L_{I,\{i\}}(a)-\sum_{j=i+1}^na_j$.
Hence, $\eqref{e-trans2}$ holds.
\end{proof}

\begin{lem}\label{lem-b}
Let $I$ be a non-empty subset of $\{1,2,\dots,n\}$ and $J$ be a non-empty subset of $I$.
Then
\begin{equation}\label{e-2}
\sum_{i\in J} (-1)^{|J\setminus \{i\}|+1}q^{\sum_{j=i+1}^na_j+L_{I\setminus\{i\},J\setminus \{i\}}(a^{(i)})}
(1-q^{a_i})
=(-1)^{|J|}q^{L_{I,J}(a)}(1-q^{a_J}),
\end{equation}
where $a_S:=\sum_{j\in S}a_j$ and $L_{I,J}(a)$ is defined as in \eqref{def-L}.
\end{lem}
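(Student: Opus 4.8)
The plan is to insert the identity~\eqref{e-keyeq} of the preceding proposition into the left-hand side of~\eqref{e-2}, thereby expressing every summand purely in terms of $L_{I,J}(a)$, and then to recognize the surviving sum over $i\in J$ as a telescoping sum.

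First I would pull the sign out of the sum. For each $i\in J$ we have $|J\setminus\{i\}|=|J|-1$, so $(-1)^{|J\setminus\{i\}|+1}=(-1)^{|J|}$ does not depend on $i$; hence the left-hand side of~\eqref{e-2} equals $(-1)^{|J|}\sum_{i\in J}q^{e_i}(1-q^{a_i})$, where $e_i:=\sum_{j=i+1}^na_j+L_{I\setminus\{i\},J\setminus\{i\}}(a^{(i)})$.

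Next I would simplify each exponent $e_i$ using~\eqref{e-keyeq}. Substituting and using $\sum_{j=i+1}^na_j-\sum_{j=i}^na_j=-a_i$ together with $i\in J$ gives
\[
e_i=L_{I,J}(a)-a_i+\sum_{\substack{j\in J\\ j\geq i}}a_j=L_{I,J}(a)+\sum_{\substack{j\in J\\ j>i}}a_j.
\]
Writing $T_i:=\sum_{j\in J,\, j>i}a_j$ and factoring out $q^{L_{I,J}(a)}$, the desired identity~\eqref{e-2} reduces to the single statement
\[
\sum_{i\in J}q^{T_i}\bigl(1-q^{a_i}\bigr)=1-q^{a_J}.
\]

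Finally I would verify this telescoping identity by listing $J=\{i_1<\cdots<i_m\}$ and observing that $T_{i_m}=0$, that $T_{i_k}+a_{i_k}=T_{i_{k-1}}$ for $2\leq k\leq m$, and that $T_{i_1}+a_{i_1}=a_J$. Expanding $q^{T_{i_k}}(1-q^{a_{i_k}})=q^{T_{i_k}}-q^{T_{i_k}+a_{i_k}}$ and summing, the intermediate terms cancel in pairs and only $q^{T_{i_m}}-q^{a_J}=1-q^{a_J}$ survives. No genuine obstacle arises; the proof is a mechanical reduction, and the only point requiring care is the exponent bookkeeping in the second step, since the sums $\sum_{j=i+1}^n$ and $\sum_{j=i}^n$ in $e_i$ and~\eqref{e-keyeq} run over all indices while the corrective sum runs only over $J$, and a single misplaced $a_i$ would spoil both the reduction to the telescoping form and the final cancellation.
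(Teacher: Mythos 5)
Your proof is correct and follows essentially the same route as the paper: both rely on the identity \eqref{e-keyeq} to control the exponents and then conclude by telescoping over the ordered elements of $J$. The only (cosmetic) difference is that you first normalize every exponent to the form $L_{I,J}(a)+\sum_{j\in J,\,j>i}a_j$ and pull out the common sign and the factor $q^{L_{I,J}(a)}$ before telescoping, whereas the paper telescopes the unreduced terms $t_{k_i}$ directly via the relation $t_{k_{i-1}}=q^{a_{k_i}}t_{k_i}$; your exponent bookkeeping checks out.
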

\begin{proof}
We prove \eqref{e-2} by telescoping.
Let $J=\{k_1,k_2,\dots,k_s\}$ such that $1\leq k_1<k_2<\cdots<k_s\leq n$.
Write the left-hand side of \eqref{e-2} as
\begin{equation}\label{e-s}
\sum_{i=1}^st_{k_i}(1-q^{a_{k_i}}),
\end{equation}
where
\begin{equation}\label{e-defi-t}
t_{k_i}:=(-1)^{|J\setminus \{k_i\}|+1}q^{\sum_{j=k_i+1}^na_j+L_{I\setminus\{k_i\},J\setminus \{k_i\}}(a^{(k_i)})}.
\end{equation}
We will show that
\begin{equation}\label{e-telescoping}
t_{k_{i-1}}=q^{a_{k_i}}t_{k_i} \quad \text{for $i=2,\dots,s$,}
\end{equation}
and the sum \eqref{e-s} reduces to
\[
t_{k_s}-q^{a_{k_1}}t_{k_1}.
\]
Then we will show that this equals the right-hand side of \eqref{e-2}.

Since $|J\setminus \{k_{i-1}\}|=|J\setminus \{k_i\}|=|J|-1$, to show $t_{k_{i-1}}=q^{a_{k_i}}t_{k_i}$ for $i=2,\dots,s$, it suffices to show that
for a fixed integer $i$ such that $2\leq i\leq s$,
\begin{equation}\label{e-3}
\sum_{j=k_{i-1}+1}^na_j+L_{I\setminus\{k_{i-1}\},J\setminus \{k_{i-1}\}}\big(a^{(k_{i-1})}\big)
=\sum_{j=k_{i}+1}^na_j+L_{I\setminus\{k_i\},J\setminus \{k_i\}}\big(a^{(k_i)}\big)+a_{k_i}.
\end{equation}
By \eqref{e-keyeq}
\begin{align*}
\sum_{j=k_{i-1}+1}^na_j+L_{I\setminus\{k_{i-1}\},J\setminus \{k_{i-1}\}}
\big(a^{(k_{i-1})}\big)
&=\sum_{j=k_{i-1}+1}^na_j+L_{I,J}(a)-\sum_{j=k_{i-1}}^na_j
+\sum_{\substack{j\in J\\j\geq k_{i-1}}}a_j\\
&=L_{I,J}(a)-a_{k_{i-1}}+\sum_{\substack{j\in J\\j\geq k_{i-1}}}a_j
=L_{I,J}(a)+\sum_{\substack{j\in J\\j\geq k_i}}a_j.
\end{align*}
Using \eqref{e-keyeq} again, we find
\begin{align*}
L_{I,J}(a)+\sum_{\substack{j\in J\\j\geq k_i}}a_j
&=L_{I\setminus\{k_i\},J\setminus \{k_i\}}\big(a^{(k_i)}\big)
+\sum_{j=k_i}^na_j-\sum_{\substack{j\in J\\j\geq k_i}}a_j
+\sum_{\substack{j\in J\\j\geq k_i}}a_j\\
&=L_{I\setminus\{k_i\},J\setminus \{k_i\}}\big(a^{(k_i)}\big)
+\sum_{j=k_i}^na_j
=L_{I\setminus\{k_i\},J\setminus \{k_i\}}\big(a^{(k_i)}\big)
+\sum_{j=k_i+1}^na_j+a_{k_i}.
\end{align*}
Hence, we obtain \eqref{e-3} and \eqref{e-telescoping} follows.

As mentioned above, by \eqref{e-telescoping}
\[
\sum_{i=1}^st_{k_i}(1-q^{a_{k_i}})
=t_{k_s}-q^{a_{k_1}}t_{k_1}.
\]
We complete the proof by showing that
\begin{equation*}
t_{k_s}-q^{a_{k_1}}t_{k_1}=(-1)^{|J|}q^{L_{I,J}(a)}(1-q^{a_J}).
\end{equation*}
By the expression \eqref{e-defi-t} for $t_{k_i}$, we have
\begin{multline*}
t_{k_s}-q^{a_{k_1}}t_{k_1}
=(-1)^{|J\setminus \{k_s\}|+1}q^{\sum_{j=k_s+1}^na_j+L_{I\setminus\{k_s\},J\setminus \{k_s\}}(a^{(k_s)})}\\
-(-1)^{|J\setminus \{k_1\}|+1}q^{\sum_{j=k_1+1}^na_j+L_{I\setminus\{k_1\},J\setminus \{k_1\}}(a^{(k_1)})+a_{k_1}}.
\end{multline*}
By \eqref{e-keyeq}
\begin{multline}\label{e-5}
t_{k_s}-q^{a_{k_1}}t_{k_1}
=(-1)^{|J\setminus \{k_s\}|+1}q^{\sum_{j=k_s+1}^na_j+L_{I,J}(a)
-\sum_{j=k_s}^na_j+\sum_{\substack{j\in J\\j\geq k_s}}a_j}\\
-(-1)^{|J\setminus \{k_1\}|+1}q^{\sum_{j=k_1+1}^na_j+L_{I,J}(a)
-\sum_{j=k_1}^na_j+\sum_{\substack{j\in J\\j\geq k_1}}a_j+a_{k_1}}.
\end{multline}
Using
\[
|J\setminus \{k_s\}|+1=|J\setminus \{k_1\}|+1=|J|, \quad
\sum_{\substack{j\in J\\j\geq k_s}}a_j=a_{k_s} \quad \text{and}\quad
\sum_{\substack{j\in J\\j\geq k_1}}a_j=a_J,
\]
and extracting out the same factor from the right-hand side of \eqref{e-5},
we obtain
\[
t_{k_s}-q^{a_{k_1}}t_{k_1}
=(-1)^{|J|}q^{L_{I,J}(a)}(1-q^{a_J}). \qedhere
\]
\end{proof}

By Lemma~\ref{lem-a} and Lemma~\ref{lem-b}, we can rewrite the next double sum.
\begin{prop}\label{prop-eq}
Let $I$ be a non-empty subset of $\{1,2,\dots,n\}$ of cardinality at least two. Then, for $r$ an integer
\begin{multline}\label{e-transsum}
\sum_{i\in I}\sum_{\emptyset \neq J\subseteq I\setminus \{i\}} (-1)^{|J|+1}q^{\sum_{j=i+1}^na_j+L_{I\setminus\{i\},J}(a^{(i)})}
\frac{(1-q^{a_i})(1-q^{a_{J}})}{(1-q^{|a|-a_i+r})(1-q^{|a|-a_i-a_{J}+r})}\\
=\sum_{\emptyset \neq J\subseteq I}(-1)^{|J|}q^{L_{I,J}(a)}
\frac{1-q^{a_{J}}}{1-q^{|a|-a_{J}+r}},
\end{multline}
where $a_S:=\sum_{j\in S}a_j$ for the set $S$ and $L_{I,J}(a)$ is defined as in \eqref{def-L}.
\end{prop}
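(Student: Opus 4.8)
The plan is to decompose the rational factor appearing in each summand on the left-hand side by partial fractions, which splits the double sum into two pieces that are dispatched by Lemma~\ref{lem-a} and Lemma~\ref{lem-b} respectively. Writing $N:=|a|+r$ for brevity, and using $a_i+a_J=a_{\{i\}\cup J}$ (valid since $i\notin J$), the key observation is the identity
\[
\frac{(1-q^{a_i})(1-q^{a_J})}{(1-q^{N-a_i})(1-q^{N-a_i-a_J})}
=(1-q^{a_i})\left(\frac{-q^{a_J}}{1-q^{N-a_i}}+\frac{1}{1-q^{N-a_i-a_J}}\right),
\]
which follows from a partial fraction decomposition in the variable $q^{N-a_i}$: the residues at the two poles produce the factors $\tfrac{1-q^{a_J}}{1-q^{-a_J}}=-q^{a_J}$ and $1$. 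Substituting this into the left-hand side of \eqref{e-transsum} writes it as $S_1+S_2$, where $S_1$ collects the terms carrying the denominator $1-q^{N-a_i}$ and $S_2$ those carrying $1-q^{N-a_i-a_J}$.

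For $S_1$, the denominator $1-q^{N-a_i}$ depends only on $i$, so I would pull the $i$-dependent factors out and perform the sum over $J\subseteq I\setminus\{i\}$ first. The resulting inner sum is exactly $\sum_{\emptyset\neq J\subseteq I\setminus\{i\}}(-1)^{|J|}q^{L_{I\setminus\{i\},J}(a^{(i)})+a_J}$, which Lemma~\ref{lem-a} evaluates to $-q^{L_{I,\{i\}}(a)-\sum_{j=i+1}^n a_j}$. The factor $q^{\sum_{j=i+1}^n a_j}$ then cancels, and since $a_{\{i\}}=a_i$ and $(-1)^{|\{i\}|}=-1$, the sum $S_1$ becomes precisely the singleton ($|J|=1$) part of the right-hand side of \eqref{e-transsum}.

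For $S_2$, the denominator is $1-q^{N-a_i-a_J}=1-q^{N-a_K}$ with $K:=\{i\}\cup J$, so it depends only on $K$. I would reindex by grouping according to $K$: each pair $(i,J)$ with $i\in I$ and $\emptyset\neq J\subseteq I\setminus\{i\}$ corresponds bijectively to a pair $(K,i)$ with $K\subseteq I$, $|K|\geq 2$, and $i\in K$ (via $J=K\setminus\{i\}$). Since $(-1)^{|J|+1}=(-1)^{|K|}=(-1)^{|K\setminus\{i\}|+1}$ and $L_{I\setminus\{i\},J}(a^{(i)})=L_{I\setminus\{i\},K\setminus\{i\}}(a^{(i)})$, after reindexing the inner sum over $i\in K$ is exactly the left-hand side of Lemma~\ref{lem-b} with $J\mapsto K$; it therefore equals $(-1)^{|K|}q^{L_{I,K}(a)}(1-q^{a_K})$, so $S_2$ becomes the $|K|\geq 2$ part of the right-hand side. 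Adding $S_1$ and $S_2$ reassembles the full sum $\sum_{\emptyset\neq J\subseteq I}$, completing the proof.

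I expect the main obstacle to be the bookkeeping in the reindexing for $S_2$: one must verify that $(i,J)\mapsto(\{i\}\cup J,\,i)$ is a bijection onto pairs $(K,i)$ with $|K|\geq 2$ and $i\in K$, and that under this reindexing the sign $(-1)^{|J|+1}$, the power $q^{\sum_{j=i+1}^n a_j+L_{I\setminus\{i\},J}(a^{(i)})}$, and the factor $1-q^{a_i}$ align exactly with the summand of Lemma~\ref{lem-b}. The partial fraction step, although the conceptual heart of the argument, is a routine algebraic simplification once the correct variable $q^{N-a_i}$ has been identified.
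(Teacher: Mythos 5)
Your proposal is correct and follows essentially the same route as the paper: the identical partial fraction split of the denominator, Lemma~\ref{lem-a} for the piece with denominator $1-q^{|a|-a_i+r}$, and the reindexing by $K=\{i\}\cup J$ so that Lemma~\ref{lem-b} evaluates the inner sum for the other piece. The bookkeeping you flag (signs and the bijection $(i,J)\mapsto(K,i)$ with $|K|\geq 2$) works out exactly as you predict.
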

\begin{proof}
By a partial fraction decomposition,
\[
\frac{1}{(1-q^{|a|-a_i+r})(1-q^{|a|-a_i-a_{J}+r})}
=\frac{1}{(1-q^{-a_{J}})(1-q^{|a|-a_i+r})}
+\frac{1}{(1-q^{a_{J}})(1-q^{|a|-a_i-a_{J}+r})}.
\]
Denote by $L$ the left-hand side of \eqref{e-transsum}. Using the above equation, we find
\begin{align*}
L&=\sum_{i\in I}\sum_{\emptyset \neq J\subseteq I\setminus \{i\}} (-1)^{|J|+1}q^{\sum_{j=i+1}^na_j+L_{I\setminus\{i\},J}(a^{(i)})}
\Big(\frac{-q^{a_{J}}(1-q^{a_i})}{1-q^{|a|-a_i+r}}
+\frac{1-q^{a_i}}{1-q^{|a|-a_i-a_{J}+r}}\Big)\\
&=\sum_{i\in I}\Big(\frac{q^{\sum_{j=i+1}^na_j}(1-q^{a_i})}{1-q^{|a|-a_i+r}}
\sum_{\emptyset \neq J\subseteq I\setminus \{i\}}
(-1)^{|J|}q^{L_{I\setminus\{i\},J}(a^{(i)})+a_{J}}\Big)\nonumber \\
&\quad
+\sum_{i\in I}\sum_{\emptyset \neq J\subseteq I\setminus \{i\}} (-1)^{|J|+1}q^{\sum_{j=i+1}^na_j+L_{I\setminus\{i\},J}(a^{(i)})}
\frac{1-q^{a_i}}{1-q^{|a|-a_i-a_{J}+r}}.
\end{align*}
We can further rewrite $L$ as
\begin{multline}\label{e-7}
L=\sum_{i\in I}\bigg(\frac{q^{\sum_{j=i+1}^na_j}(1-q^{a_i})}{1-q^{|a|-a_i+r}}
\sum_{\emptyset \neq J\subseteq I\setminus \{i\}}
(-1)^{|J|}q^{L_{I\setminus\{i\},J}(a^{(i)})+a_{J}}\bigg)\\
\quad+\sum_{\substack{J\subseteq I\\|J|>1}}
\Big(\frac{1}{1-q^{|a|-a_J+r}}
\sum_{i\in J}
(-1)^{|J\setminus \{i\}|+1}q^{\sum_{j=i+1}^na_j+L_{I\setminus\{i\},J\setminus \{i\}}(a^{(i)})}(1-q^{a_i})\Big).
\end{multline}
By Lemma~\ref{lem-a}
\begin{subequations}\label{e-main}
\begin{equation}\label{e-main1}
\sum_{\emptyset \neq J\subseteq I\setminus \{i\}}
(-1)^{|J|}q^{L_{I\setminus\{i\},J}(a^{(i)})+a_{J}}
=-q^{L_{I,\{i\}}(a)-\sum_{j=i+1}^na_j}.
\end{equation}
By Lemma~\ref{lem-b}, for $J\subseteq I$ and $|J|>1$ we have
\begin{equation}\label{e-main2}
\sum_{i\in J} (-1)^{|J\setminus \{i\}|+1}q^{\sum_{j=i+1}^na_j+L_{I\setminus\{i\},J\setminus \{i\}}(a^{(i)})}
(1-q^{a_i})
=(-1)^{|J|}q^{L_{I,J}(a)}(1-q^{a_J}).
\end{equation}
\end{subequations}
Substituting \eqref{e-main} into \eqref{e-7} yields \eqref{e-transsum}.
\end{proof}

\section{Proof of Theorem~\ref{Thm-rec}}\label{sec-proof}

In this section, we give a proof of Theorem~\ref{Thm-rec} using the inductive formula \eqref{e-ind} for $D_{v,v^+}(a)$ and Proposition~\ref{prop-eq}.

\begin{proof}[Proof of Theorem~\ref{Thm-rec}]
Let $s=|I|$ be the cardinality of $I$. We then proceed by induction on $s$.

If $s=1$, then by \eqref{e-ind} we have
\begin{equation}\label{e-s1}
D_{v,v^+}(a)= q^{\sum_{j=k+1}^na_j}\qbinom{|a|+r-1}{a_{k}-1}D_{v^{(k)},(v^{(k)})^+}(a^{(k)}),
\end{equation}
where $k$ is the index of the unique largest element in $v$.
It is easy to check that
\[
\qbinom{|a|+r-1}{a_{k}-1}=
\frac{1-q^{a_k}}{1-q^{|a|-a_k+r}}
\frac{\qbinom{|a|+r-1}{a,r-1}}
{\qbinom{|a|-a_k+r-1}{a^{(k)},r-1}}.
\]
Substituting the above equation into \eqref{e-s1} we obtain
the $s=1$ case of \eqref{formuh1}.

Suppose $2\leq s\leq n$. By \eqref{e-ind} and the induction hypothesis, we find
\begin{multline}\label{e-p1}
D_{v,v^+}(a)=
\sum_{i\in I} q^{\sum_{j=i+1}^na_j}\qbinom{|a|+r-1}{a_{i}-1}
D_{v^{(I)},(v^{(I)})^+}\big(a^{(I)}\big)\frac{\qbinom{|a|-a_i+r-1}{a^{(i)},r-1}}
{\qbinom{|a|-a_I+r-1}{a^{(I)},r-1}} \\
\times\sum_{\emptyset \neq J\subseteq I\setminus \{i\}}(-1)^{|I\setminus J|-1}q^{L_{I\setminus\{i\},J}(a^{(i)})}
\frac{1-q^{a_{J}}}{1-q^{|a|-a_i-a_{J}+r}}.
\end{multline}
Since
\[
\qbinom{|a|+r-1}{a_{i}-1}\qbinom{|a|-a_i+r-1}{a^{(i)},r-1}
=\frac{1-q^{a_i}}{1-q^{|a|-a_i+r}}\qbinom{|a|+r-1}{a,r-1},
\]
we can rewrite \eqref{e-p1} as
\begin{multline}\label{e-p2}
D_{v,v^+}(a)=(-1)^{|I|}D_{v^{(I)},(v^{(I)})^+}\big(a^{(I)}\big)\frac{\qbinom{|a|+r-1}{a,r-1}}
{\qbinom{|a|-a_I+r-1}{a^{(I)},r-1}}
\sum_{i\in I}\sum_{\emptyset \neq J\subseteq I\setminus \{i\}} (-1)^{|J|+1}q^{\sum_{j=i+1}^na_j+L_{I\setminus \{i\},J}\big(a^{(i)}\big)} \\
\times\frac{(1-q^{a_i})(1-q^{a_{J}})}{(1-q^{|a|-a_i+r})(1-q^{|a|-a_i-a_{J}+r})}.
\end{multline}
Then the theorem follows by Proposition~\ref{prop-eq}.
\end{proof}

\end{document}